\definecolor{Blue}{rgb}{0.3,0.3,0.9}
\definecolor{todo}{rgb}{1,0,0}
\definecolor{answer}{rgb}{0,0,1}
\definecolor{new}{rgb}{1,0,1}
\definecolor{conditional}{rgb}{0,1,0}
\definecolor{e-mail}{rgb}{0,.40,.80}
\definecolor{reference}{rgb}{.20,.60,.22}
\definecolor{mrnumber}{rgb}{.80,.40,0}
\definecolor{citation}{rgb}{0,.40,.80}
\newtheorem{theorem}{Theorem}[section]
\newtheorem{lemma}[theorem]{Lemma}
\numberwithin{equation}{section}
\theoremstyle{remark}
\begin{document}

\title{A Mean Field Approximation of the Bolker-Pacala Population Model\protect}
\author{Mariya Bessonov\footnote{The first author was supported in part by NSF grant DMS-0739164.}
 \\\small Department of Mathematics\\[-0.8ex]
\small Cornell University, Ithaca, NY 14853,USA\\
\\
Stanislav Molchanov, Joseph Whitmeyer
 \\\small Department of Mathematics and Statistics\\[-0.8ex]
\small University of North Carolina at Charlotte, Charlotte, NC 28223,USA\\}

\maketitle

\begin{abstract}
We approximate the Bolker-Pacala model of population dynamics with the logistic Markov chain and analyze the latter.   We find the asymptotics of the degenerated hypergeometric function and use these to prove a local CLT and large deviations result.  We also state global limit theorems and obtain asymptotics for the first passage time to the boundary of a large interval.
\end{abstract}

\renewcommand{\thefootnote}{\fnsymbol{footnote}}
\footnotetext{\emph{Key words} Bolker-Pacala model; logistic Markov chain; limit theorems}
\footnotetext{\emph{AMS Subject Classification} 60J80,92D25}
\renewcommand{\thefootnote}{\arabic{footnote}}

\bigskip

\section{Introduction}
The central problem in population dynamics is the construction of ``realistic" models with non-trivial ergodic limits,  that is, to construct birth-and-death Markov processes that tend in law to a stationary state.  These models must satisfy, in addition, several other requirements. First, they must possess stability with respect to random noise, which always exists in the environment. Second, they should contain stochastic dynamics (we have developed, for example, models involving migration and, in some cases, immigration \cite{fmw}).  Third, because all bio-populations demonstrate an intermittency effect or ``patches," these models should exhibit deviations from local Poissonian (uniform) statistics. (Note that in this paper by ``model" we simply mean a process with an external phenomenon as a referent, and thus we often use ``model" and ``process" interchangeably.)

The classical Galton-Watson process (see \cite{gs, teh}), for example, cannot satisfy these requirements, as it has two significant defects: there are no spatial dynamics -- indeed, no spatial distribution of particles -- and there is no interaction between particles.

The first defect can be easily remedied if we have the particles undergo independent random walks. Recent work \cite{fmw,kkm11,ks06} contains the important result that if the underlying random walk is transient, then, under mild technical conditions, the point field $n(t,x)$ has a unique stationary ergodic limit $n(\infty,x)$. Unfortunately, situations with other requirements are much more complex.

The fact that the population is stable only in the critical condition that $b = \mu$, for birth rate $b$ and mortality rate $\mu$, is less than satisfactory, given that, in the short run at least, there is no obvious reason why this condition should hold.  Another troublesome fact arises from the work of Kondratiev, Kutoviy, and Molchanov \cite{kkm11}.  They show that for a population in a time-independent, stationary-in-space random environment, there is no non-trivial limiting distribution. Roughly speaking, then, the population is stable only if the parameters specify the critical condition at almost every point in the space. Clearly, it would be preferable to have an alternative model that more robustly yielded a stable distribution--which, after all, is found frequently in nature.  

For this we turn to the Bolker-Pacala model \cite{bp99,bpn03}.  The Bolker-Pacala model, well-known in the theory of population dynamics, is a stochastic spatial model that incorporates both spatial dynamics and competition.
It is of especial concern, however, that we have no good qualitative information about the B-P (or logistic) process. Consequently, in this paper we begin to study the B-P model. Our approach is to consider a discrete version on the lattice $\mathbb{Z}^d$ and then to use a random walk representation in mean field approximation.

The paper is organized as follows. Section~\ref{sec:description} offers a detailed description of the Bolker-Pacala process. The next section describes our mean-field model and presents the logistic Markov chain, which we use to analyze the process.  Section 4 contains our analysis of the logistic Markov chain. These include, after preliminary supporting sections, limit theorems for the invariant distribution, global limit theorems, and analysis of the first passage time to the boundary points of an interval. 

Our principal results are contained in section 4.  We begin by deriving asymptotic properties of the degenerated (confluent) hypergeometric function, which we use to prove a local Central Limit Theorem.  Subsequently, we obtain a large deviations result for the invariant distribution for the logistic Markov chain. Namely, if we make the weight of the quadratic term in the B-P model proportional to $L^{-1}$, then the invariant probability for large deviations from the equilibrium state is dominated by $L^{-1/2}$ times a constant times the exponential of $-L$ (Theorem~\ref{LDT}).  Based on theorems of Kurtz \cite{tk70,tk71}, we state global limit theorems---a functional LLN and CLT---for the logistic Markov chain.  Lastly, we obtain asymptotics for the first passage time from the equilibrium point of the logistic Markov chain.  Specifically, we show that  the first passage time from the equilibrium point to the bounds of a symmetric interval, the left bound of which is close to 0, is dominated by $L^{-1/2}$ 
times a constant times the exponential of $L$ (equation \ref{symmFPT}).


\section{Preliminaries: Description of the Process}\label{sec:description}
In this section, we introduce the general Bolker-Pacala model, which can be formulated as follows. 
At time $t=0$, we have an initial homogeneous population, that is, a locally finite point process $$n_0(\Gamma) = \#\text{(particles in $\Gamma$ at time $t=0$)},$$ where $\Gamma$ denotes a bounded and connected region in $\mathbb{R}^d$.  The simplest option is for $n_0(\Gamma)$ to be a Poissonian point field with intensity $\rho > 0$, i.e., $$P\{n_0(\Gamma) = k\} = \exp(-\rho|\Gamma|)\frac{(\rho|\Gamma|)^k}{k!},\ k=0,1,2,\ldots$$ where $|\Gamma|$ is the finite Lebesgue measure of $\Gamma$.  The following rules dictate the evolution of the field:
\begin{enumerate}
\item[i)] Each particle, independent of the others, during time interval $(t,t+dt)$ can produce a new particle (offspring or seed) with probability $b dt = A^+ dt$, $A^+>0$.  The initial particle remains at its initial position $x$ but the offspring jumps to $x+z+dz$ with probability $$a^+(z) dz,\quad A^+=\int\limits_{\mathbb{R}^d} \!\! a^+(x) dx.$$  Note that this can be seen equivalently as two random events, the birth of a particle and its dispersal, as in Bolker and Pacala's presentation \cite{bp99}, or as a single random event, as in our model.  (We stress that this differs from the classical branching process, in which the ``parental" particle and its offspring commence independent motion from the same point.)  We assume, of course, that all offspring evolve independently according to the same rules. 
\item[ii)] Each particle at point $x$ during the time interval $(t,t+{d}t)$ dies with probability $\mu {d}t$, where  $\mu$ is the mortality rate.
\item[iii)] Most important is the competition factor.  If two particles are located at the points $x,y \in \mathbb{R}^d$, then each of them dies with probability $a^-(x-y)dt$ during the time interval $(t,t+dt)$ (we may assume that both do not die). This requires, of course, that $a^-(\cdot)$ be integrable; set $$A^- = \int\limits_{\mathbb{R}^d} \!\! a^-(z) {d}z.$$ The total effect of competition on a particle is the sum of the effects of competition with all individual particles. 
\end{enumerate}
Here we have interacting particles, in contrast to the usual branching process.  One can expect physically that for arbitrary non-trivial competition ($a^-\in C(\mathbb{R}^d)$, $A^- > 0$), there will exist a limiting distribution of the particles.  At each site $x$, with population at time $t$ given by $n(t,x)$, three rates are relevant, the birth rate $b$ and mortality rate $\mu$, each proportional to $n(t,x)$ and the death rate due to competition, proportional to $n(t,x)^2$.   Heuristically, when $n(t,x)$ is small the linear effects will dominate, which means that if $b > \mu$ the population will grow.  As $n(t,d)$ becomes large, however, the quadratic effect will become inceasingly dominant, which will prevent unlimited growth.  At present, this fact has been proven only under strong restrictions on $a^+$ and $a^-$ \cite{fkkk2}.

\section{Mean Field Approximation}\label{sec:meanfield}
In this section, we introduce the mean field approximation to the general Bolker-Pacala process. For the remainder of the paper, we consider only the approximation model described below. All particles live on the lattice, $\mathbb{Z}^d$. We can suppose that each lattice point $x$ has an associated square $x+[0,1)^d$, and the number of particles at $x$ represents the number of inhabitants in the continuous model of that square that is associated with $x$.  Additionally, we assume that migration and immigration are uniform within the box. Let $Q_{L} \subset \mathbb{Z}^d$ be a box with $|Q_{L}| = L$, $L$ being a large parameter, and suppose that no particles exist outside of $Q_{L}$. We let
\begin{align*}
 a^+(x) = \frac{\kappa}{L} \text{ on } Q_L, \quad
 a^-(x) = \frac{\gamma}{L^2}  \text{ on } Q_L
\end{align*}
for some rates $\kappa, \gamma \ge 0$. Thus, the distribution of a particle after a jump due to migration or immigration is uniform on $Q_L$. We let $b$ and $\mu$ be the birth and mortality rates, respectively.
Let 
\begin{equation*}
 N_L(t) = \sum_{x\in Q_L} n(t,x)
\end{equation*}
be the total number of particles. $N_L(t)$ is a Markov process, which we call the ``logistic'' Markov chain.

The transition rates for $N_L(t)$ are 
\begin{displaymath}
  P\left(N_L(t+dt) = j | N_L(t) = n  \right) =  \left\{
     \begin{array}{ll}
       nb\,dt + o(dt^2)& \text{ if } j = n+1\\
       n\mu\,dt + \gamma n^2/L \,dt + o(dt^2)& \text{ if } j = n-1\\
       o(dt^2)& \text{ otherwise }
     \end{array}
   \right.
\end{displaymath} 
If $N_L(t)$ is large, therefore, there is a left drift, whereas if $N_L(t)$ is small, there will be a drift to the right. An important point is the equilibrium point, $n_L^*$, where the rates are equal, that is,
\begin{equation*}
 b n_L^* = \mu n_L^* + \frac{\gamma n_L^{*2}}{L},
\end{equation*}
which means that
\begin{equation*}
 n_L^* = \left\lfloor \frac{L(b-\mu)}{\gamma} \right\rfloor.
\end{equation*}
We will show that as $L$ becomes large, the Markov chain $N_L(t)$ tends quickly to a neighborhood of $n_L^*$ and afterward fluctuates randomly around $n_L^*$. Our goal is the analysis of these fluctuations.

The logistic Markov chain is a particular case of a birth and death random walk on $\mathbb{Z}^1_+ = \{0,1,\ldots\}$, which has been studied extensively (e.g. \cite{fe1, km65, tk70}). The generator for a birth and death random walk, $X(t)$, $t\geq 0$ is given by 
\begin{equation*}
\mathcal{L}\psi (x) = \alpha_x \psi(x-1) - (\alpha_x + \beta_x)\psi(x) + \beta_x \psi(x+1)
\end{equation*}
for $x > 0$ and 
\begin{equation*}
\mathcal{L}\psi (0) = \beta_0\psi(1) -  \beta_0 \psi(0).
\end{equation*}
For the logistic Markov chain $X(t) := N_L(t)$, the transition rates are
\begin{align*}
 \beta_x = bx, x\geq 1, \beta_0 = 1, \quad
  \alpha_x = \mu x + \frac{\gamma x^2}{L}, x\geq 1
\end{align*}
We will analyze the logistic Markov chain asymptotically as $L\to\infty$.
In fact, it is more convenient to study a modified logistic chain with transition rates:
\begin{align}
 \beta_x = b(x+1), x\geq 0, \quad
  \alpha_x = \mu x + \frac{\gamma x^2}{L}, x\geq 1.
\label{modifiedTrnsRates}
\end{align}
Here, the equilibrium point is

\begin{equation*}
 \tilde{n}_L^* = \left\lfloor \frac{(b-\mu) + \sqrt{(b-\mu)^2 + 4\gamma/L}}{2\gamma/L} \right\rfloor,
\end{equation*}
is equal to the old equilibrium point for large enough $L$. For convenience, when appropriate, we assume that $L$ is such that $(b-\mu)L/\gamma$ is an integer. Thus, $$\tilde{n}_L^* = \frac{(b-\mu)L}{\gamma}$$.

There are two reasons for this change. First, unlike the unmodified chain, the modified logistic chain has no absorbing state at $x = 0$. Second, calculations for the modified logistic chain are simpler. Let us stress that as $L$ becomes large, the asymptotics are equivalent for both models. 


\section{Detailed Analysis of the Logistic Markov Chain}


\subsection{Hypergeometric Functions}

In this section, we will study the asymptotic properties of the degenerated hypergeometric function. For a particular choice of parameters, such functions will play an important role in the analysis of the logistic Markov chain with transition rates \eqref{modifiedTrnsRates}.

The classical degenerated (or confluent) hypergeometric function depends on two parameters, $\alpha$ and $\gamma$, and is given by the Taylor expansion

\begin{equation*}
 \Phi(\alpha,\gamma,z) = 1 + \frac{\alpha}{\gamma}\frac{z}{1!} + \frac{\alpha(\alpha + 1)}{\gamma (\gamma + 1)}\frac{z^2}{2!} + \ldots + \frac{\alpha (\alpha + 1)\ldots (\alpha + n-1)}{\gamma (\gamma +1)\ldots (\gamma +n-1)}\frac{z^n}{n!} + \ldots
\end{equation*}
(see \cite{gr65}, 9.21). $\Phi$ is an entire function of order $1$.
We focus on the special case when $\alpha = 1$ and $\gamma = A$, $A \gg 1$, and call such a function

\begin{equation*}
 \mathcal{F}(A,z) = 1 + \frac{z}{A} + \ldots + \frac{z^n}{A(A+1)\ldots(A +n - 1)} + \ldots
\end{equation*} 
In our case, the general integral representation of $\Phi(\alpha,\gamma,z)$ (\cite{gr65}, 9.211,1) leads to the formula

\begin{equation*}
 \mathcal{F}(A,z) = \frac{2^{1-A}e^{z/2} \Gamma(A)}{\Gamma(A-1)}\int\limits_{-1}^{1} (1-t)^{A-2}e^{zt/2}\,dt = 2^{1-A}e^{z/2} (A-1) \int\limits_{-1}^{1} (1-t)^{A-2}e^{zt/2}\,dt
\end{equation*} 
The substitution $t=1-s$ leads to
\begin{align}
 &\mathcal{F}(A,z) = 2^{1-A}\,e^{z/2}\,(A-1) \int_{0}^{2} s^{A-2}e^{z(1-s)/2}\, ds \notag\\
& = 2^{1-A}\,e^{z}\,(A-1) \int_{0}^{2} s^{A-2}e^{-zs/2}\, ds 
= \frac{e^{z}\,(A-1)}{z^{A-1}}\int_{0}^{z} t^{A-2}e^{-t}\, dt  \label{F-final}
\end{align}
where, in the last step, we used the substitution $sz/2 = t$.
The integral factor in \eqref{F-final} is the incomplete $\Gamma$-function:
\begin{equation*}
 \gamma(A-1,z) = \int_{0}^{z}t^{A-2}e^{-t}\, dt
\end{equation*}
Of course, 
\begin{equation*}
 \gamma(A-1,z) = \Gamma(A-1)-\Gamma(A-1,z), \qquad  \Gamma(A-1,z) = \int_{z}^{\infty} t^{A-2}e^{-t} \, dt
\end{equation*}
If $A,z \gg 1$, one can use the Laplace method to obtain asymptotics for $\mathcal{F}(A,z)$. Putting $t=As$, we have

\begin{equation*}
 \gamma(A-1,z) = A^{A-1} \int_{0}^{z/A} e^{-A\left(s -\frac{A-2}{A} \ln s\right)}\, ds
\end{equation*}
The critical point of the ``phase function'' $$\Phi(s) = s - \frac{A-2}{A}\ln s$$ (where $\Phi'(s_0) = 0$) is $s_0 = (A-2)/A$ and the asymptotic behavior of $\gamma(A-1,z)$ depends on the relationship between $z/A$ and $s_0$.

Routine application of the Laplace method leads to the theorem
\begin{theorem}\label{Degen}
 Consider
\begin{equation*}
 \mathcal{F}(A,z) = 1 + \frac{z}{A} + \ldots + \frac{z^n}{A(A+1)\ldots(A + n - 1)} + \ldots.
\end{equation*}
If $A,z\to\infty$ and
\begin{enumerate}
 \item[I.] $z<A$, $\sqrt{A}/(A-z)\to 0$, 
then

\begin{equation*}
 \mathcal{F}(A,z) \sim \frac{A}{A-z} = o(\sqrt{A})
\end{equation*}

\item[II.] $z>A$ and  $\sqrt{A}/(z-A)\to 0$, then (using Stirling's formula)

\begin{equation}\label{DegII}
 \mathcal{F}(A,z) \sim \frac{e^z \Gamma(A)}{z^{A-1}} \sim e^{z-A+1} \left( \frac{A-1}{z} \right)^{A-1} \sqrt{2 \pi A}
\end{equation}

\item[III.] $z = A + h\sqrt{A}, \quad h > 0$ is a constant, then

\begin{equation*}
 \mathcal{F}(A,z) \sim e^{-h^2/2} \Phi(h)\sqrt{2\pi A}, \qquad \Phi(h) = \frac{1}{\sqrt{2\pi}}\int_{-\infty}^{h}e^{-y^2/2}dy
\end{equation*}

\item[IV.] $z = A - h\sqrt{A}, \quad h > 0$ is a constant, then

\begin{equation*}
 \mathcal{F}(A,z) \sim e^{h^2/2}\Phi(-h)\sqrt{2\pi A}
\end{equation*}

\end{enumerate}

\end{theorem}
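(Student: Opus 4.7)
My plan is to leverage the integral representation just derived, $\mathcal{F}(A,z) = e^z(A-1)\gamma(A-1,z)/z^{A-1}$, and to extract asymptotics of the incomplete Gamma function $\gamma(A-1,z) = A^{A-1}\int_0^{z/A} e^{-A\psi(s)}\,ds$ via the Laplace method, where I write $\psi(s) = s - (A-2)A^{-1}\ln s$ for the phase (to avoid clashing with the Gaussian CDF). The phase has a unique minimum at $s_0 = (A-2)/A \to 1$, with $\psi''(s_0) = A/(A-2) \to 1$, so the Gaussian scale around $s_0$ is $A^{-1/2}$. The four cases correspond exactly to where the upper endpoint $z/A$ sits relative to $s_0$: well below (I), well above (II), or within Gaussian scale of $s_0$ on either side (III, IV).

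For Case I, the interval $(0,z/A)$ lies strictly below $s_0$ and $\psi$ is monotone decreasing there, so the integral is dominated by its upper endpoint. I would apply the standard endpoint Laplace estimate $\int_0^{z/A}e^{-A\psi(s)}\,ds \sim e^{-A\psi(z/A)}/(A|\psi'(z/A)|)$, substitute $\psi'(z/A) = 1-(A-2)/z$ and $A\psi(z/A) = z - (A-2)\ln(z/A)$, and simplify to obtain $\gamma(A-1,z) \sim z^{A-1}e^{-z}/(A-z)$; multiplying by $e^z(A-1)/z^{A-1}$ immediately yields $\mathcal{F}(A,z) \sim A/(A-z)$. The hypothesis $\sqrt A/(A-z)\to 0$ is exactly what places the endpoint at distance $\gg A^{-1/2}$ from $s_0$, which is what the endpoint Laplace approximation requires. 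Case II is dual: the Gaussian peak at $s_0$ is entirely interior to the range, so $\gamma(A-1,z)\sim\Gamma(A-1)$ (equivalently, the tail $\Gamma(A-1,z)$ is negligible), and feeding this into the integral representation together with Stirling $\Gamma(A)\sim(A-1)^{A-1/2}e^{-(A-1)}\sqrt{2\pi}$ gives \eqref{DegII} directly.

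Cases III and IV are the transition regime $z = A \pm h\sqrt A$, where the endpoint sits at Gaussian-scale distance from $s_0$, so neither the endpoint Laplace nor the full Stirling reduction is by itself adequate. The cleanest approach is a direct substitution $t = A + y\sqrt A$ in $\int_0^z t^{A-2}e^{-t}\,dt$; Taylor-expanding $(A-2)\ln(1+y/\sqrt A) - y\sqrt A = -y^2/2 + O(A^{-1/2})$ uniformly on bounded $y$-intervals converts the integrand to Gaussian form, and since the lower limit $-\sqrt A \to -\infty$ while the upper limit is $\pm h$, we obtain $\gamma(A-1,z) \sim A^{A-3/2}e^{-A}\sqrt{2\pi}\,\Phi(\pm h)$. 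Substituting back into the integral representation, using Stirling, and expanding $(A-1)\ln((A-1)/z)$ to order $O(A^{-1})$ then yields the stated asymptotics involving $\Phi(\pm h)\sqrt{2\pi A}$ times the claimed exponential factor.

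The principal technical obstacle in Cases III and IV is the Taylor bookkeeping that produces the $O(1)$ surviving exponent: the $\pm h\sqrt A$ contributions in $e^{z-A+1}$ and in $((A-1)/z)^{A-1}$ must cancel to leading order, leaving behind only the quadratic correction from the expansion of $\ln(1\pm h/\sqrt A)$. This is a routine but care-intensive computation, and it is the only step where a sign slip would change the answer; a useful independent check is to view $\gamma(A-1,z)/\Gamma(A-1)$ as the CDF of a Gamma$(A-1,1)$ random variable evaluated at $z$ and recognize, via the CLT with $E[T] = A-1$ and $\mathrm{Var}(T) = A-1$, that this CDF tends to $\Phi(\pm h)$, which reproduces the Gaussian CDF factor by an entirely different route.
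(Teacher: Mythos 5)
Your route is the same as the paper's: the paper derives the integral representation $\mathcal{F}(A,z) = e^z(A-1)z^{-(A-1)}\gamma(A-1,z)$, identifies the phase $s - \frac{A-2}{A}\ln s$ with critical point $s_0=(A-2)/A$, and then simply asserts that ``routine application of the Laplace method leads to the theorem.'' Your proposal fills in exactly the case analysis the paper leaves implicit (endpoint-dominated Laplace for I, interior peak plus Stirling for II, Gaussian-scale endpoint for III--IV), and the Gamma-CDF/CLT reading of $\gamma(A-1,z)/\Gamma(A-1)$ is a genuinely useful independent check that the paper does not mention. Cases I and II as you outline them are correct, including your observation that the hypothesis $\sqrt{A}/(A-z)\to 0$ is precisely the validity condition for the endpoint approximation.

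However, in Case III the bookkeeping you defer does not produce the stated answer. Carrying out your own prescription: $\gamma(A-1,z)\sim\Gamma(A-1)\Phi(h)$, and the prefactor $e^z\Gamma(A)/z^{A-1}$ with $z=A+h\sqrt{A}$ equals $e^{h\sqrt{A}}\sqrt{2\pi A}\,(1+h/\sqrt{A})^{-(A-1)}(1+o(1)) = e^{+h^2/2}\sqrt{2\pi A}\,(1+o(1))$, since $(A-1)\ln(1+h/\sqrt{A})=h\sqrt{A}-h^2/2+o(1)$. So the method yields $\mathcal{F}(A,A+h\sqrt{A})\sim e^{+h^2/2}\Phi(h)\sqrt{2\pi A}$, not $e^{-h^2/2}\Phi(h)\sqrt{2\pi A}$. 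Your own consistency checks confirm this: as $h\to\infty$ Case III must match Case II, whose right-hand side is $e^{h^2/2}\sqrt{2\pi A}(1+o(1))$ in this regime (whereas the printed Case III tends to $0$ relative to $\sqrt{A}$), and a direct evaluation of the series (terms $\approx e^{hx-x^2/2}$ for $n=x\sqrt{A}$) gives $\sqrt{A}\int_0^\infty e^{hx-x^2/2}\,dx=e^{h^2/2}\Phi(h)\sqrt{2\pi}$. Case IV as printed, $e^{h^2/2}\Phi(-h)\sqrt{2\pi A}$, is consistent with all of this---it is the same formula with $h\mapsto -h$ and matches Case I as $h\to\infty$. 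So the sign slip you warned about is not yours: the printed exponent in Case III is in error, and the one place your proposal cannot be literally correct is where it asserts that the computation ``yields the stated asymptotics'' there.
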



\subsection{General Results for 1D Random Walks}
Consider a general random walk $X(t)$ on $\mathbb{Z}^{1}_{+} = \{ 0, 1, 2, \ldots \}$ in continuous time and the transition rates from $x$, $\alpha_x$ to the left and $\beta_x$ to the right, where $\beta_x > 0, x\geq 0$ and $\alpha_x>0, x\geq1$ and $\alpha_0 = 0$.

The general birth and death random walk with generator $$\mathcal{L}\psi (x) = \alpha_x \psi(x-1) - (\alpha_x + \beta_x)\psi(x) + \beta_x \psi(x+1)$$ is ergodic if and only if the following series converges (\cite{fe1}, XV.7)
\begin{equation}\label{Seqn}
S = 1 + \frac{\beta_0}{\alpha_1} + \frac{\beta_0\beta_1}{\alpha_1\alpha_2} + \ldots + \frac{\beta_0\ldots\beta_n}{\alpha_1\ldots\alpha_{n+1}} + \ldots.
\end{equation}
In this case, the invariant distribution is given by (\cite{fe1}, XV)
\begin{equation}\label{Invteqn}
\pi(x) = \lim_{t\to\infty} p(t,\cdot,x) = \left\{
     \begin{array}{ll}
        S^{-1}, & x=0\\
       S^{-1}\frac{\beta_0\ldots\beta_{x-1}}{\alpha_1\ldots\alpha_{x}}, & x>0
           \end{array}
   \right.
\end{equation}

Let $\tau_y = \min\{t : x(t) = y \}$. From the ergodicity of $X(t)$, it follows that for any $x,y\in \mathbb{Z}^{1}_{+}, x\neq y$, $E_x \tau_y$ is finite. We will use the notations $E_x\tau_y = E\tau_{x\to y}$ and $E_x\tau_y = u(x,y)$. For fixed $y$ and $x>y$, $u(x,y)$ satisfies 

\begin{displaymath}
\left\{
     \begin{array}{ll}
        \mathcal{L}u(x,\cdot) = -1, & x>y\\
       u(y,\cdot) = 0
           \end{array}
   \right.
\end{displaymath} 
One can understand $u(x,\cdot)$ as $\lim\limits_{L\to\infty} u_L(x,y), x\in [y,L]$, where
\begin{displaymath}
\left\{
     \begin{array}{l}
        \mathcal{L}u _L(x,\cdot) = -1,\\
       u_L(y,\cdot) = u_L(L,\cdot) = 0
           \end{array}
   \right.
\end{displaymath}
To see this, for finite $L>x$ let $\tau_{y,L} = \min\{t:x(t) = y\ \mathrm{or}\ L\}$.  Again, $E_x \tau_{y,L}$ is finite. Set $u_L(x,y) = E_x\tau_{y,L}$. For fixed $y$ and $x>y$, $u_L(x,y)$ satisfies 
\begin{displaymath}
\left\{
     \begin{array}{ll}
        \mathcal{L}u_L(x,\cdot) = -1, & x>y\\
        u_L(y,y) = u_L(L,L) = 0
           \end{array}
   \right.
\end{displaymath} 
By the Monotone Convergence Theorem, then, $\tau_{y,L} \to \tau_y$ as $L \to \infty$ and so $E_x \tau_{y,L} \to E_x \tau_y$, as $L \to \infty$.

Note that for $x>y$,
\begin{equation*}
\tau_{x\to y} = \tau_{x\to x-1} + \tau_{x-1 \to x-2} + \ldots \tau_{y+1\to y}
\end{equation*}

\begin{lemma}\label{LemEtau} Assume that the random walk $X(t)$ is ergodic. Then
\begin{equation}\label{EqnEtau}
\mathrm{E} \tau_{y+1 \to y} = \frac{1}{\alpha_{y+1}}\left[1 + \frac{\beta_{y+1}}{\alpha_{y+2}} + \frac{\beta_{y+1}\beta_{y+2}}{\alpha_{y+2}\alpha_{y+3}} +\ldots \right]
\end{equation}
\end{lemma}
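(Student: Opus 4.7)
The natural approach is first-step analysis. Set $T_k := \mathrm{E}\tau_{k\to k-1}$ for $k \geq 1$. Conditioning on the first transition from state $k$ -- which occurs after an exponential holding time of mean $1/(\alpha_k+\beta_k)$ and lands at $k-1$ with probability $\alpha_k/(\alpha_k+\beta_k)$ or at $k+1$ with probability $\beta_k/(\alpha_k+\beta_k)$ -- together with the observation that every downward path from $k+1$ to $k-1$ must pass through $k$, yields via the strong Markov property the identity
$$T_k = \frac{1}{\alpha_k+\beta_k} + \frac{\beta_k}{\alpha_k+\beta_k}\bigl(T_{k+1}+T_k\bigr).$$
This rearranges to the one-step recursion $T_k = 1/\alpha_k + (\beta_k/\alpha_k)\,T_{k+1}$.

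Iterating this recursion $n$ times starting from $k=y+1$ produces
$$T_{y+1} = \frac{1}{\alpha_{y+1}}\left[1 + \frac{\beta_{y+1}}{\alpha_{y+2}} + \cdots + \frac{\beta_{y+1}\cdots\beta_{y+n-1}}{\alpha_{y+2}\cdots\alpha_{y+n}}\right] + R_n,$$
with non-negative remainder $R_n := \bigl(\prod_{j=1}^{n}\beta_{y+j}/\alpha_{y+j}\bigr)\,T_{y+n+1}$. Identity \eqref{EqnEtau} will then follow upon showing $R_n\to 0$ and that the infinite series converges.

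Showing $R_n\to 0$ is the main obstacle, and I would dispatch it via the finite-truncation machinery already set up in the excerpt. For the chain on $\{y,y+1,\ldots,L\}$ reflected at $L$ (so $\beta_L=0$), the same first-step analysis gives the identical recursion for $k<L$, now closed by the boundary value $T_L^{(L)} = 1/\alpha_L$; iterating it to completion produces the exact finite-sum identity
$$T_{y+1}^{(L)} = \frac{1}{\alpha_{y+1}}\sum_{j=0}^{L-y-1}\prod_{i=1}^{j}\frac{\beta_{y+i}}{\alpha_{y+i+1}}.$$
A coupling argument (reflecting at $L$ can only accelerate the hitting of $y$) gives $T_{y+1}^{(L)} \leq T_{y+1}$, while recurrence of the ergodic chain forces $P(\tau_y<\tau_L)\to 1$ as $L\to\infty$, on which event the two chains coincide; monotone convergence (exactly as invoked in the excerpt for $u_L$) then yields $T_{y+1}^{(L)}\nearrow T_{y+1}$, while the right-hand side increases to the infinite series in \eqref{EqnEtau}. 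Finiteness of that series is provided by the detailed-balance identity $\pi(y+j+1)/\pi(y+1) = \prod_{i=1}^{j}\beta_{y+i}/\alpha_{y+i+1}$, which expresses the series as $[\alpha_{y+1}\pi(y+1)]^{-1}\sum_{k\geq y+1}\pi(k) \leq [\alpha_{y+1}\pi(y+1)]^{-1}$, finite by ergodicity (convergence of $S$ in \eqref{Seqn}).
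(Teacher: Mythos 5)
Your proof is correct, and it shares the paper's overall architecture: the heart of both arguments is the one-step recursion $\mathrm{E}\tau_{k\to k-1} = \frac{1}{\alpha_k} + \frac{\beta_k}{\alpha_k}\mathrm{E}\tau_{k+1\to k}$ (which the paper obtains as equation \eqref{DeltaEqn} for the first differences $\Delta(x)=u(x)-u(x-1)$ of the solution of $\mathcal{L}u=-1$, and which you obtain probabilistically by first-step analysis), followed by a finite truncation and a passage to the limit. The execution differs in two places. First, the paper truncates to $[0,N]$ with absorption at \emph{both} endpoints and exploits the telescoping identity $\Delta_N(1)+\cdots+\Delta_N(N)=0$ to solve for $\Delta_N(1)$ exactly, whereas you truncate with reflection at $L$, which closes the recursion at the top and yields the partial sums directly. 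Second, and more substantively, you are more careful than the paper about why the boundary term disappears in the limit: the paper's step ``this is true for all $k$, including $k=N$, and so $\Delta(1)=\lim_N\Delta_N(1)=\ldots$'' silently drops the term $\frac{\beta_1\cdots\beta_{N-1}}{\alpha_1\cdots\alpha_{N-1}}\Delta_N(N)$ without justification, while your combination of the nonnegativity of $R_n$ (giving the upper bound on the partial sums), the event $\{\tau_y<\tau_L\}$ together with monotone convergence (giving the matching lower bound), and the detailed-balance identity (giving convergence of the series from ergodicity of \eqref{Seqn}) closes that gap. Note that the coupling inequality $T_{y+1}^{(L)}\le T_{y+1}$ you invoke is actually already implied by $R_n\ge 0$ once you have the exact finite-sum identity, so that part of your argument is slightly redundant; everything else is sound.
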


\begin{proof}
Let $y$ be fixed. Set $$u(x-y) := \mathrm{E}\tau_{x\to y},\quad x \ge y.$$ Then $u(0) = 0$ and

\begin{equation}
 \alpha_x u(x-1) - (\alpha_x + \beta_x)u(x) + \beta_xu(x+1) = -1.
\label{uEqn}
\end{equation}
Set $$\Delta(x) = u(x) - u(x-1), x\geq 1.$$ Then

\begin{equation}
 \Delta(x+1) = \frac{\alpha_x}{\beta_x}\Delta(x) - \frac{1}{\beta_x}
\label{DeltaEqn}
\end{equation}
Consider a random walk $\tilde{X}_N(t)$ with the same transition rates but on the finite interval $[0,N]$. We are interested in the first entrance to $N$. Let $$\tau_{0,N} = \min\{t: \tilde{X}_N(t) = 0 \text{ or }N\},  \quad u_N(x) := \mathrm{E}_x(\tau_{0,N}), \quad\text{and}\quad$$ $$\Delta_N(x) := u_N(x) - u_{N}(x-1).$$ Then, $u_N$  satisfies \eqref{uEqn} with boundary conditions $$u_N(0) = u_N(N) = 0$$ and $\Delta_N(x)$ satisfies \eqref{DeltaEqn} as well as $\lim\limits_{N\to\infty}\Delta_N(x) = \Delta(x)$, pointwise in $x$.

For the finite chain,
\begin{equation}
 \Delta_N(1) + \Delta_N(2) + \ldots + \Delta_N(N) = 0
\label{SumofDeltas}
\end{equation}
From \eqref{DeltaEqn} and \eqref{SumofDeltas}, one obtains
\begin{equation*}
\Delta_N(k) = \frac{\alpha_1\alpha_2\ldots\alpha_{k-1}}{\beta_1\beta_2\ldots\beta_{k-1}}\Delta_N(1) - \frac{\alpha_2\ldots\alpha_{k-1}}{\beta_1\beta_2\ldots\beta_{k-1}} - \frac{\alpha_3\ldots\alpha_{k-1}}{\beta_2\ldots\beta_{k-1}} -\ldots - \frac{1}{\beta_{k-1}}
\end{equation*}
Note that $$\Delta_N(1) = u_N(1) - u_N(0) = u_N(1).$$
We now multiply through by $\frac{\beta_1\beta_2\ldots\beta_{k-1}}{\alpha_1\alpha_2\ldots\alpha_{k-1}}$ to obtain equations for $\Delta_N(1)$

\begin{equation*}
 \Delta_N(1) = \frac{\beta_1\beta_2\ldots\beta_{k-1}}{\alpha_1\alpha_2\ldots\alpha_{k-1}}\Delta_N(k) + \frac{\beta_1\beta_2\ldots\beta_{k-2}}{\alpha_1\alpha_2\ldots\alpha_{k-1}} + \frac{\beta_1\beta_2\ldots\beta_{k-3}}{\alpha_1\alpha_2\ldots\alpha_{k-2}} + \ldots + \frac{1}{\alpha_1}
\end{equation*}
This is true for all $k$, including $k=N$, and so
\begin{equation*}
 \Delta(1) := \lim_{N\to\infty}\Delta_N(1) = \frac{1}{\alpha_1}\left(1 + \frac{\beta_1}{\alpha_2} + \frac{\beta_1\beta_2}{\alpha_2\alpha_3} + \ldots \right).
\end{equation*}
Therefore, $E\tau_{y+1\to y}$ is given by
\begin{equation*}
E\tau_{y+1\to y} = \Delta(y+1) = \frac{1}{\alpha_{y+1}}\left(1  + \frac{\beta_{y+1}}{\alpha_{y+2}} + \frac{\beta_{y+1}\beta_{y+2}}{\alpha_{y+2}\alpha_{y+3}} + \ldots \right).
\qedhere\end{equation*}
\end{proof}

The second factor of \eqref{EqnEtau} has the following interpretation.  Put $$S_y := 1 + \frac{\beta_{y}}{\alpha_{y+1}} + \frac{\beta_{y}\beta_{y+1}}{\alpha_{y+1}\alpha_{y+2}} +\ldots$$ (i.e., $S = S_0 = 1 + \frac{\beta_{0}}{\alpha_{1}} + \frac{\beta_{0}\beta_{1}}{\alpha_{1}\alpha_{2}} +\ldots$). Then, 
\begin{equation}\label{Invteqn2}
\pi(x) = \lim_{t\to\infty} p(t,\cdot,x) = S_y^{-1}\frac{\beta_0\ldots\beta_{x-1}}{\alpha_1\ldots\alpha_{x}}.
\end{equation}
$S_y^{-1} = \pi_y^{(y)}$ is, therefore, the invariant probability for the random walk $X(t)$ on $[y, \infty)$ with reflection rate $\beta_y$ at $y$.

Considering the cycles between successive transitions $y+1 \to y$ and $y \to y+1$ and applying the Law of Large Numbers we deduce that
\begin{equation*}
\pi_y^{(y)} = \frac{1/\beta_y}{1/\beta_y + \mathrm{E}\tau_{y+1\to y}}
\end{equation*}
and so
\begin{equation*}
S_y = 1 + \beta_y \mathrm{E}\tau_{y+1\to y}.
\end{equation*}
It also follows that
\begin{equation*}
\mathrm{E}\tau_{y+1\to y} = \frac{1}{\alpha_{y+1}} S_{y+1}.
\end{equation*}

For the logistic chain with $$\beta_x = b\left(x+1\right), \quad \alpha_x = x\left(\mu + \frac{\gamma x}{L}\right),$$ we represent $S_y$, $y \ge 0$ in terms of the hypergeometric function $F(A,z)$.

\begin{equation}
\begin{split}
S_y &= 1 + \frac{\beta_y}{\alpha_{y+1}} + \frac{\beta_y \beta_{y+1}}{\alpha_{y+1} \alpha_{y+2}} + \ldots \\
	&= 1 + \frac{b}{\mu + \frac{\gamma (y+1)}{L}} + \frac{b^2}{\left (\mu + \frac{\gamma (y+1)}{L} \right ) \left (\mu + \frac{\gamma (y+2)}{L}\right )} + \ldots \\
	&= 1 + \frac{bL/\gamma}{\frac{\mu L}{\gamma} + y+1} + \frac{ (bL/\gamma)^2}{\left (\frac{\mu L}{\gamma} + y+1 \right ) \left(\frac{\mu L}{\gamma} + y+2 \right)} + \ldots \\
	&= F\left(\mu L/\gamma + y, bL/\gamma \right).
\end{split}
\end{equation}
This formula, asymptotics from Theorem \ref{Degen}, and the obvious relation
\begin{equation*}
	\mathrm{E}\tau_{x\to y} = \mathrm{E}\tau_{x\to x-1} + \mathrm{E}\tau_{x-1\to x-2} + \cdots + \mathrm{E}\tau_{y+1\to y}, \quad \text{ for } x>y
\end{equation*}
will be the basis for the analysis of the first passage times.

Let us note also that there is a recurrence formula connecting $\mathrm{E}\tau_{y+1\to y}$ and $\mathrm{E}\tau_{y+2\to y+1}$.  From \eqref{EqnEtau}, we obtain
\begin{equation*}
\begin{split}
	\mathrm{E}\tau_{y+1\to y} &= \frac{1}{\alpha_{y+1}} + \frac{\beta_{y+1}}{\alpha_{y+1}} \left[\frac{1}{\alpha_{y+2}} \left(1 + \frac{\beta_{y+2}}{\alpha_{y+3}} + \frac{\beta_{y+2}\beta_{y+3}}{\alpha_{y+3}\alpha_{y+4}} +\ldots \right) \right] \\
	&= \frac{1}{\alpha_{y+1}} + \frac{\beta_{y+1}}{\alpha_{y+1}} \mathrm{E}\tau_{y+2\to y+1}.
\end{split}
\end{equation*}


\subsection{Limit theorems for the invariant distribution of the logistic Markov chain}

We apply the general results on the 1D ergodic random walk on $\mathbb{Z}^{1}_{+}$ to the particular case of the logistic Markov chain, which we defined above as a means of studying the mean field Bolker-Pacala process. For the modified chain, by the results above, we can obtain a local Central Limit Theorem.

\begin{theorem}[Local CLT]
 Let $b > \mu$. If $k = O(L^{2/3})$, then, for the invariant distribution $\pi_L$,

\begin{equation*}
 \pi_L(n_L^* + k) \sim \frac{e^{-k^2/2\sigma_L^2}}{\sqrt{2\pi\sigma_L^2}},
\end{equation*}
where $\sigma_L^2 = Lb/\gamma$.

\end{theorem}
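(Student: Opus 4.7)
The plan is to substitute the explicit formula \eqref{Invteqn} into $\pi_L(n_L^{*}+k)$ and estimate the two ingredients separately: the normalizing constant $S$ via Theorem~\ref{Degen}, and the product $\prod_{j=1}^{x}\beta_{j-1}/\alpha_j$ via Stirling's formula. Set $A=\mu L/\gamma$ and $z=bL/\gamma=\sigma_L^{2}$. A direct calculation using $\beta_j=b(j+1)$ and $\alpha_j=j(\mu+j\gamma/L)$ gives
$$\frac{\beta_0\cdots\beta_{x-1}}{\alpha_1\cdots\alpha_x}=\frac{z^{x}\,\Gamma(A+1)}{\Gamma(A+x+1)}.$$
The crucial algebraic point is that $n_L^{*}=z-A$ exactly, so at $x=n_L^{*}+k$ the denominator collapses to $\Gamma(z+k+1)$. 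After this telescoping, the Gaussian shape will emerge directly from the large-argument asymptotics of $\Gamma$.

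Because $b>\mu$, we have $z-A=n_L^{*}\asymp L$ while $\sqrt{A}/(z-A)=O(L^{-1/2})\to 0$, so Theorem~\ref{Degen} case~II applies and yields $S\sim e^{z}\Gamma(A+1)/z^{A}$. The $\Gamma(A+1)$ factors cancel against the product above, leaving
$$\pi_L(n_L^{*}+k)\sim\frac{z^{z+k}e^{-z}}{\Gamma(z+k+1)}.$$
Applying Stirling, $\Gamma(z+k+1)\sim\sqrt{2\pi(z+k)}\bigl((z+k)/e\bigr)^{z+k}$ (justified since $z+k\to\infty$ uniformly in the given range), and regrouping gives
$$\pi_L(n_L^{*}+k)\sim\frac{e^{k}}{\sqrt{2\pi(z+k)}}\,\Bigl(\frac{z}{z+k}\Bigr)^{z+k}.$$

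The final step is the log-expansion
$$(z+k)\log\bigl(1+k/z\bigr)=k+\frac{k^{2}}{2z}+O\!\left(\frac{k^{3}}{z^{2}}\right),$$
together with $\sqrt{z+k}=\sqrt{z}\bigl(1+O(k/z)\bigr)$ and $k/z=O(L^{-1/3})$, which reduces the expression to $e^{-k^{2}/(2\sigma_L^{2})}/\sqrt{2\pi\sigma_L^{2}}$ up to a $(1+o(1))$ factor, as claimed.

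The main technical obstacle is the uniform control of the cubic remainder $k^{3}/z^{2}$ in the log-expansion across the full range $|k|\le C L^{2/3}$: at the top of this range $k^{3}/z^{2}$ is of order one, sitting precisely at the threshold where the plain Gaussian gives way to the Edgeworth regime. A clean proof of the $\sim$ statement therefore wants the slightly stronger condition $k=o(L^{2/3})$, under which Taylor's theorem with remainder and the $O(1/(z+k))$ error in Stirling both become $o(1)$ uniformly in $k$; all other error terms in the argument (the next correction in Theorem~\ref{Degen} case~II, and $\sqrt{z+k}/\sqrt{z}-1$) are of order at most $L^{-1/3}$ and pose no difficulty.
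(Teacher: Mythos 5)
Your proof is correct and reaches the stated asymptotic, but it takes a genuinely different computational route from the paper. The paper splits the problem in two: it first computes the ratio $\pi_L(n_L^*+k)/\pi_L(n_L^*)$ as a product, converts the log of that product to an integral to extract the Gaussian factor $e^{-\gamma k^2/2LA}$, and then separately evaluates $\pi_L(n_L^*)\sim (2\pi bL/\gamma)^{-1/2}$ by combining the case~II asymptotics of $\mathcal{F}$ for $S$ with an Euler--Maclaurin evaluation of the product $(\mu+\gamma/L)\cdots(\mu+n_L^*\gamma/L)$. You instead collapse the entire weight $\beta_0\cdots\beta_{x-1}/(\alpha_1\cdots\alpha_x)$ into the exact identity $z^{x}\Gamma(A+1)/\Gamma(A+x+1)$, cancel $\Gamma(A+1)$ against the case~II asymptotics of $S$, and land on the Poisson-type expression $z^{z+k}e^{-z}/\Gamma(z+k+1)$, after which a single application of Stirling and one log-expansion finish the job. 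Your version buys a cleaner structural insight (near equilibrium the invariant measure is asymptotically a Poisson$(z)$ mass function, $z=bL/\gamma$) and replaces two separate Euler--Maclaurin/integral estimates with one Stirling step; the paper's version keeps the ratio $\pi_L(n_L^*+k)/\pi_L(n_L^*)$ explicit, which it reuses in the large-deviations theorem. Your remark about the range of $k$ is well taken and applies equally to the paper's own argument: there the third-order term $-\gamma^2k^3/(6A^2L^2)$ in the integral expansion is likewise only $O(1)$ when $k\asymp L^{2/3}$, so in both proofs the clean $\sim$ statement really requires $k=o(L^{2/3})$ (or a uniformly bounded multiplicative correction at the edge of the range).
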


\begin{proof}

From \eqref{Seqn} we have
\begin{equation}\label{Seqn2}
 S = 1 + \frac{b}{\mu + \gamma/L} + \frac{b^2}{(\mu + \gamma/L)(\mu + 2\gamma/L)} + \ldots + \frac{b^n}{(\mu + \gamma/L)\ldots(\mu + n\gamma/L)} + \ldots
\end{equation}

And for $n\in\mathbb{Z}^1_+$, from \eqref{Invteqn}

\begin{equation}\label{Invteqn2}
 \pi_L(n) = S^{-1}\frac{b^n}{(\mu + \gamma/L)\ldots(\mu + n\gamma/L)}.
\end{equation}
To analyze this series, we consider the position, which we label $n_L^*$, such that the ratio of the terms $\frac{\pi_L(n)}{\pi_L(n-1)} \approx 1$.  (If there are two such positions, we take $n_L^*$ to be the larger.)  Taking this ratio, we find that

\begin{equation*}
\frac{(b-\mu)L}{\gamma} -1 \leq n_L^* \leq \frac{(b-\mu)L}{\gamma}.
\end{equation*}
We obtain the formula

\begin{equation*}
 \pi_L(n_L^* + k) = \pi_L(n_L^*)\cdot\frac{b^k}{A^k}\cdot\frac{1}{(1+\frac{\gamma}{LA})\ldots(1+\frac{k\gamma}{LA})},
\end{equation*}
where $A = \mu + \frac{n_L^*\gamma}{L} = b + O(1/L)$. Now, we consider

\begin{equation*}
 \sum_{i=0}^{k} \ln\left(1 + \frac{\gamma i}{AL}\right) = \int\limits_{0}^{k} \ln \left(1 + \frac{\gamma x}{AL}\right) \,dx + O\left(\ln\left(1 + \gamma k/AL\right)\right).
\end{equation*}
We integrate the series $\ln(1+x) = x - \frac{1}{2}x^2 + \frac{1}{3}x^3 - \cdots$, and take $k = O\left(L^{2/3}\right)$.

\begin{align*}
 \int\limits_{0}^{k} \ln\left(1 + \frac{\gamma x}{AL}\right)\, dx &= \int\limits_{0}^{k} \frac{\gamma x}{AL} \, dx - \frac{1}{2} \int\limits_{0}^{k} \left(\frac{\gamma x}{AL}\right)^2 \, dx\\
& = \frac{\gamma k^2}{2AL} - \frac{1}{6}\cdot\frac{\gamma^2 k^3}{A^2L^2} + \ldots\\
& = \frac{\gamma k^2}{2AL} + O\left(1 \right).
\end{align*}
Thus, 

\begin{equation*}
 \pi_L(n_L^* + k) \approx \pi_L(n_L^*)\left(\frac{b}{b+O(1/L)} \right)^ke^{-\gamma k^2/2LA}
\end{equation*}
It remains to calculate $\pi_L(n_L^*)$, for which we use \eqref{Invteqn2}.

We apply the asymptotic formulas for $\mathcal{F}(A,z)$ (\eqref{DegII}) to \eqref{Seqn2} to calculate $S$:

\begin{equation*}
\begin{split}
	 S_L &= 1 + \frac{(bL/\gamma)}{(1 + \mu L/\gamma)} + \frac{(bL/\gamma)^2}{(1 + \mu L /\gamma)(2 + \mu L /\gamma)} + \ldots \\
	& = \mathcal{F}\left(\mu L /\gamma, bL/\gamma \right) \\
	& \sim e^{n_L^*} \left(\frac{\mu}{b}\right) ^ {\mu L / \gamma} \sqrt{2\pi \frac{\mu}{\gamma} L}.
\end{split}
\end{equation*}
For the product in the denominator, we use the Euler-Maclaurin formula (see \cite{WW1915}). First,

\begin{equation*}
\begin{split}
\Pi_{n_L^*} := (\mu + \gamma/L)\cdots &(\mu + n\gamma/L) = \mu^{n_L^*}(1+\frac{\gamma}{\mu L})\cdots(1+\frac{n_L^*\gamma}{\mu L}) \\
& = \mu^{n_L^*} \exp \left(\sum_{k=0}^{n_L^*}\ln (1 + k \omega) \right),
\end{split}
\end{equation*}
where $\omega := \frac{\gamma}{\mu L}$.
The Euler-Maclaurin formula gives
\begin{equation}\label{EMf}
\begin{split}
	\sum_{k=0}^r \ln{(1+\omega k)} &= \frac{1}{\omega} \int_{1}^{1+r\omega} \!\! \ln x \,{d}x + \frac{1}{2} \ln (1+r\omega) + O(\omega)\\
	&= (\frac{1}{\omega} + r + \frac{1}{2}) \ln (1 + r\omega) - r + O(\omega).
\end{split}
\end{equation}
Thus
\begin{equation*}
\begin{split}
\Pi_{n_L^*} &\sim \mu^{n_L^*} \exp \left[(\frac{1}{\omega} + r + \frac{1}{2}) \ln (1 + r\omega) - r \right] \\
& = \frac{\mu^{n_L^*}\left(\frac{b}{\mu}\right) ^{Lb/\gamma + 1/2}}{e^{L(b-\mu)/\gamma}}.
\end{split}
\end{equation*}
And so, finally,
\begin{equation}
\pi_L(n_L^*) = \frac{b^{n_L^*}}{S_L \Pi_{n_L^*}} \sim \frac{1}{\sqrt{2\pi \frac{b}{\gamma}L}}.
\end{equation}
\end{proof}

\noindent Our large deviations result is
\begin{theorem}[Large Deviations]\label{LDT}
For constant $\delta > 0$,

\begin{equation*}
 \pi(n_L^* + \delta L) \asymp \frac{1}{\sqrt{L}} e^{-L f(\delta \gamma/ b) b /\gamma},
\end{equation*}
where $f(z) := \displaystyle\sum_{n=2}^\infty \frac{(-1)^n}{n(n-1)} z^n = \int_{0}^z \!\! \ln(1+x) {d}x$.

\end{theorem}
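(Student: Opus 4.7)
The plan is to adapt the calculation used in the local CLT proof to the regime $k = \delta L$, the key difference being that one must retain the full function $f(z)=\int_0^z\ln(1+x)\,dx$ in the exponent rather than truncating its Taylor expansion after the quadratic term, which was legitimate only when $k = O(L^{2/3})$.

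First, I would reuse the identity derived in the local CLT proof, valid for any admissible $k$:
\begin{equation*}
\pi_L(n_L^* + k) \;=\; \pi_L(n_L^*)\,\Bigl(\tfrac{b}{A}\Bigr)^{k}\prod_{i=1}^{k}\frac{1}{1+i\gamma/(LA)}, \qquad A = \mu + \tfrac{n_L^*\gamma}{L} = b + O(1/L),
\end{equation*}
together with the asymptotic $\pi_L(n_L^*) \sim (2\pi Lb/\gamma)^{-1/2}$ established there. The prefactor $L^{-1/2}$ in the conclusion comes directly from this. Next, I would specialize to $k=\delta L$ and take the logarithm of the product, applying Euler--Maclaurin exactly as in equation \eqref{EMf} with $\omega = \gamma/(LA)$ and $r = \delta L$, so that $r\omega = \delta\gamma/A$ is a bounded constant. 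This yields
\begin{equation*}
\sum_{i=1}^{\delta L}\ln\!\Bigl(1+\tfrac{i\gamma}{LA}\Bigr) \;=\; \tfrac{LA}{\gamma}\!\int_0^{\delta\gamma/A}\!\!\ln(1+u)\,du \;+\; O(1) \;=\; \tfrac{LA}{\gamma}\,f(\delta\gamma/A) + O(1),
\end{equation*}
since the boundary correction $\tfrac12\ln(1+\delta\gamma/A)$ is $O(1)$ and the residual term is $O(\omega)=O(1/L)$.

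To reach the stated form I would then replace $A$ by $b$: because $A - b = O(1/L)$ and $f$ is $C^1$ with $f'(x) = \ln(1+x)$ bounded on the compact interval $[0,\delta\gamma/b+o(1)]$, the mean value theorem gives
\begin{equation*}
\tfrac{LA}{\gamma}\,f(\delta\gamma/A) \;=\; \tfrac{Lb}{\gamma}\,f(\delta\gamma/b) + O(1).
\end{equation*}
Simultaneously $(b/A)^{\delta L} = (1+O(1/L))^{\delta L} = \Theta(1)$, contributing only a bounded multiplicative constant. Assembling all three factors produces $\pi_L(n_L^*+\delta L) \asymp L^{-1/2}\exp\!\bigl(-Lb\,f(\delta\gamma/b)/\gamma\bigr)$, as claimed.

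The main subtle point, and the only nontrivial obstacle, is Step~3: because the exponent is of order $L$, every approximation must be controlled to within $O(1)$ on the exponential scale. The notation $\asymp$ permits precisely that tolerance, so it is essential to verify that (i) the higher-order Euler--Maclaurin terms are genuinely $O(1/L)$ (routine, since all relevant derivatives of $\ln(1+\gamma x/(LA))$ on $[0,\delta L]$ decay like $L^{-j}$), (ii) the replacement $A \mapsto b$ inside $f$ contributes no more than $O(1)$ to the exponent, and (iii) the factor $(b/A)^{\delta L}$ does not conspire with another source of error to introduce an unbounded logarithmic correction. All three estimates follow from the smoothness of $f$ on a fixed compact interval, but keeping track of them carefully is where the bulk of the bookkeeping lies.
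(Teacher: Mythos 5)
Your proposal is correct and follows essentially the same route as the paper: the paper likewise writes $\pi(n_L^*+k)$ as $\pi(n_L^*)$ times the ratio of transition rates, invokes the asymptotic $\pi_L(n_L^*)\sim(2\pi Lb/\gamma)^{-1/2}$ from the local CLT, and applies the Euler--Maclaurin formula \eqref{EMf} to identify the exponent with $\frac{1}{\omega}f(k\omega)+O(1)$. The only difference is cosmetic bookkeeping (the paper sets $\omega=\gamma/(bL)$ exactly and takes $k=\delta n_L^*$, whereas you carry $A=b+O(1/L)$ and justify the replacement $A\mapsto b$ at the end), and your error control on the exponential scale is, if anything, spelled out more carefully than in the paper.
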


\begin{proof}
Set $N := n_L^*(1+\delta)$ and $k := \delta n_L^*$.
Applying the formula for general Markov chains (\ref{Invteqn}), the calculated invariant probability $\pi(n_L^*)$, and the Euler-Maclaurin formula (with $\omega:=\frac{\gamma}{bL}$), 
\begin{equation}
\begin{split}
\pi(N) &= \pi(n_L^*) \frac{\beta_{n_L^*}\cdots \beta_{n_L^*+k-1}}{\alpha_{n_L^*+1}\cdots \alpha_{n_L^*+k}} \sim \frac{1}{\sqrt{2\pi \frac{b}{\gamma}L}} \frac{b^k}{(\mu + (n_L^*+1)\frac{\gamma}{L})\cdots(\mu + (n_L^*+k)\frac{\gamma}{L})} \\
& = \frac{1}{\sqrt{2\pi \frac{b}{\gamma}L}} \frac{1}{(1+\omega)\cdots(1+k\omega)} \\
& = \frac{1}{\sqrt{2\pi \frac{b}{\gamma}L}} \exp \left(-\sum_{j=0}^{k}\ln (1 + j \omega) \right) \\
& \sim \frac{1}{\sqrt{2\pi \frac{b}{\gamma}L}} \exp \left(-[(\frac{1}{\omega} + k + \frac{1}{2}) \ln (1 + k\omega) - k] \right).
\end{split}
\end{equation}

Using the expansion $\ln(1+x) = x - \frac{1}{2}x^2 + \frac{1}{3}x^3  - \frac{1}{4}x^4 + \ldots$, substituting in for $\omega$, $k$, and $n_L^*$, and multiplying through, we obtain for the exponent: 
\begin{equation}
(\frac{1}{\omega} + k + \frac{1}{2}) \ln (1 + k\omega) - k = L \frac{b}{\gamma} \sum_{n=2}^\infty \frac{(-1)^n}{n(n-1)} \left(\frac{\delta(b-\mu)}{b} \right)^n.
\end{equation}

\end{proof}


\subsection{Global Limit Theorems}
A functional Law of Large Numbers for the logistic Markov chain follows directly from Theorem 3.1 in Kurtz (1970 \cite{tk70}).  Likewise, a functional Central Limit Theorem follows from Theorems 3.1 and 3.5 in Kurtz (1971 \cite{tk71}).  We state these theorems here, therefore, without proof.

Define a new stochastic process for the population density, $Z_L(t) := \frac{N_L(t)}{L}$.  Set $z^* = \frac{n_L^*}{L} = \frac{b - \mu}{\gamma}$. 

We define the transition function, $f_L(\frac{N_L}{L},j) := \frac{1}{L} p(N_L,N_L+j)$. Thus,
        \begin{equation*}
        f_{L}(z,j) = \left\{
        \begin{array}{l}
        \frac{b N_L}{L} = b z \hspace {3.3 cm} j=1 \\
        \frac{\mu N_L +\frac{\gamma}{L} N_L^2}{L} = \mu z + \gamma z^2 \hspace {1.4 cm} j = -1\\
        \mathrm{(not\, needed)} \hspace {2.9 cm} j = 0 \\
        \end{array}
        \right.
        \end{equation*}  
Note that $f_L(z,j)$ does not, in fact, depend on $L$ and we write simply $f(z,j)$.

\begin{theorem}[Functional LLN]
As $L \to \infty$, $Z_L(t) \to Z(t)$ uniformly in probability, where $Z(t)$ is a deterministic process, the solution of
	\begin{equation}\label{cm1}
	\frac{d Z(t)}{dt} = F(Z(t)), \ \ \ \ \ Z(0) = z_0. 
	\end{equation}
where 
	\begin{equation*}
   F(z) := \displaystyle\sum_{j} jf(z,j) = b z - \mu z - \gamma z^2 = \gamma z(z^* - z).
	\end{equation*}
\end{theorem}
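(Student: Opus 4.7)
The plan is to invoke Theorem 3.1 of Kurtz \cite{tk70}, whose scope is precisely density-dependent continuous-time Markov chains on $L^{-1}\mathbb{Z}_+$. The first step is to cast $N_L(t)$ in this framework by noting that its transition intensities $q_L(n,n+j)$ satisfy $q_L(n,n+j) = L \cdot f(n/L,j)$, with $f(z,\pm 1)$ the function displayed in the statement and, crucially, independent of $L$. Thus $Z_L(t) = N_L(t)/L$ is a density process in Kurtz's sense with scaling parameter $L$, and his LLN is applicable in principle.

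Next I would verify the hypotheses of Kurtz's theorem. The jump distribution is supported on $\{-1,+1\}$, so the moment sum $\sum_j |j|\,f(z,j) = bz + \mu z + \gamma z^2$ is finite and locally Lipschitz in $z$. The drift
\[
F(z) \;=\; \sum_j j\,f(z,j) \;=\; bz - \mu z - \gamma z^2 \;=\; \gamma z(z^* - z)
\]
is polynomial, hence $C^\infty$ and Lipschitz on every compact subinterval of $\mathbb{R}_+$. The initial condition $Z_L(0) \to z_0$ in probability is part of the setup of the theorem. The ODE \eqref{cm1} admits a unique global solution because $F$ has simple zeros at $0$ and $z^*$ with opposite-signed drift on either side.

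A short localization argument addresses the fact that these Lipschitz bounds are only local on $\mathbb{R}_+$. Since $F(z) \le 0$ for $z \ge z^*$ and $F(z) \ge 0$ for $0 \le z \le z^*$, the solution $Z(t)$ of \eqref{cm1} stays in the compact interval $[0,\max(z_0,z^*)]$ for all $t \ge 0$. Choosing $M > \max(z_0,z^*)$ and stopping $Z_L$ upon first exit from $[0,M]$, the strongly restoring drift $-\gamma z^2$ near $M$ forces the exit probability to vanish as $L \to \infty$ (this can be seen by a standard supermartingale/Lyapunov estimate using $z^2$ as a test function, once $z$ exceeds $z^*$). Applying Kurtz's theorem to the stopped process then yields, for every $T>0$ and $\varepsilon>0$,
\[
P\!\left(\sup_{t \le T} |Z_L(t) - Z(t)| > \varepsilon\right) \to 0,
\]
which is precisely uniform-in-probability convergence on compact time intervals.

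The main obstacle will not be the verification of the moment or Lipschitz conditions, which are essentially automatic in this two-jump setting, but rather the nonexplosion/localization step: one must rule out rare excursions of $Z_L$ into the regime $z \gg z^*$ where Kurtz's compact-domain assumptions would otherwise fail. Fortunately, the quadratic competition term in $F$ supplies a strong mean-reverting force, so this localization is a routine Lyapunov argument rather than a substantive difficulty.
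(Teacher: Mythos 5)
Your proposal takes exactly the same route as the paper, which simply observes that the transition function $f_L(z,j)=\frac{1}{L}p(N_L,N_L+j)$ is independent of $L$ and then cites Theorem 3.1 of Kurtz (1970) without further proof. Your verification of the Lipschitz hypotheses and the localization argument ruling out excursions above $z^*$ supply details the paper omits entirely, but the underlying argument is the same.
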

Equation \eqref{cm1} is in fact that of the stochastic logistic model, as discussed by Pollett \cite{pp00}.  It has the solution
\begin{equation*}
Z(t,z) = \frac{z^*z}{z+(z^*-z)e^{-\gamma z^* t}},\ t \ge 0.
\end{equation*}
Next, define $G_L(z) := \displaystyle\sum_{j}j^2 f_L(z,j) = (b + \mu)z + \gamma z^2$.  This too does not depend on $L$ and we simply write $G(z)$.

\begin{theorem}[Functional CLT]
If $\sqrt{L} \left(Z_L(0)-z^* \right) = \zeta_0$, the processes 
\[\zeta_L(t):=\sqrt{L}(Z_L(t) - Z(t)) \]
converge weakly in the space of cadlag functions on any finite time interval $[0, T]$ to a Gaussian diffusion $\zeta(t)$ with:
\begin{enumerate}
\item[1)] initial value $\zeta(0) = \zeta_0$, 

\item[2)] mean $$E\zeta(s) = \zeta_0 L_s := \zeta_0 e^{\int\limits_0^s F'(Z(u))du},$$

\item[3)] variance $$\mathrm{Var}(\zeta(s)) = L_s^2 \int\limits_0^s L_u^{-2} G(Z(u))du.$$
\end{enumerate}

Suppose, moreover, that $F(z_0) = 0$, i.e., $z_0 = z^*$, the equilibrium point. Then, $Z(t) \equiv z_0$ and $\zeta(t)$ is an Ornstein-Uhlenbeck process (OUP) with initial value $\zeta_0$, infinitesimal drift $$q := F'(z_0) = \mu - b$$ and infinitesimal variance $$a := G(z_0) = \frac{2b}{\gamma}(b - \mu).$$

Thus, $\zeta(t)$ is normally distributed with mean $$\zeta_0 e^{qt} = \zeta_0 e^{-(b - \mu)t}$$ and variance $$\frac{a}{-2q}\left(1 - e^{2qt}\right) = \frac{b}{\gamma}\left(1 - e^{-2(b - \mu)t}\right).$$
\end{theorem}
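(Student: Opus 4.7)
The plan is to invoke Kurtz's functional CLT for density-dependent Markov chains (\cite{tk71}, Theorems 3.1 and 3.5), as the author signals. The logistic chain fits Kurtz's framework exactly: its transition rates have the form $p(N_L, N_L + j) = L\,f(N_L/L, j)$ with $f(z, 1) = bz$ and $f(z, -1) = \mu z + \gamma z^2$, as displayed just before the theorem. The function $f(z, j)$ is polynomial in $z$ and supported on $j = \pm 1$, so it is $C^\infty$ in $z$, uniformly Lipschitz on compacts in $z$, and trivially satisfies any $j$-moment condition. The drift $F(z) = \sum_j j f(z,j) = \gamma z(z^* - z)$ is $C^1$ (in fact smooth) as required, and the infinitesimal variance $G(z) = \sum_j j^2 f(z, j) = (b+\mu)z + \gamma z^2$ is continuous and nonnegative.

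With these verifications, Kurtz's LLN yields $Z_L \to Z$ uniformly in probability on any compact $[0, T]$, where $Z$ solves $\dot Z = F(Z)$, and Kurtz's CLT then asserts that $\zeta_L(t) = \sqrt{L}(Z_L(t) - Z(t))$ converges weakly in the Skorokhod space $D[0, T]$ to the Gaussian diffusion $\zeta$ satisfying the linear SDE
\begin{equation*}
d\zeta(t) = F'(Z(t))\,\zeta(t)\,dt + \sqrt{G(Z(t))}\,dW(t), \qquad \zeta(0) = \zeta_0,
\end{equation*}
for a standard Brownian motion $W$. The only nontrivial hypothesis to check is tightness / non-explosion of $Z_L$ on $[0, T]$, which is the step I expect to be the main obstacle because $f(z,-1)$ grows quadratically in $z$. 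It is handled by a standard localization argument: the competition term makes $F(z) < 0$ for $z > z^*$, so $Z_L$ is stochastically bounded on $[0,T]$, and truncating at some large $M > z^*$ and then letting $M \to \infty$ gives the required tightness.

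Items 2) and 3) follow from the variation-of-constants formula for this linear SDE. With $L_s := \exp\bigl(\int_0^s F'(Z(u))\,du\bigr)$, one has
\begin{equation*}
\zeta(s) = L_s\,\zeta_0 + L_s \int_0^s L_u^{-1} \sqrt{G(Z(u))}\,dW(u),
\end{equation*}
from which $E\zeta(s) = \zeta_0 L_s$ and, by the It\^o isometry, $\mathrm{Var}(\zeta(s)) = L_s^2 \int_0^s L_u^{-2}\,G(Z(u))\,du$, exactly as stated.

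Finally, in the special case $z_0 = z^*$ the identity $F(z^*) = 0$ forces $Z(t) \equiv z^*$, so the SDE becomes autonomous with constant coefficients, i.e.\ an Ornstein--Uhlenbeck equation. A direct computation gives $F'(z^*) = (b - \mu) - 2\gamma z^* = \mu - b = q$ and $G(z^*) = (b+\mu)z^* + \gamma (z^*)^2 = \frac{b-\mu}{\gamma}\bigl[(b+\mu) + (b-\mu)\bigr] = \frac{2b(b-\mu)}{\gamma} = a$. Substituting $F'(Z(u)) \equiv q$ and $G(Z(u)) \equiv a$ into the general mean/variance formulas recovers the claimed OU mean $\zeta_0 e^{qt}$ and variance $\frac{a}{-2q}(1 - e^{2qt}) = \frac{b}{\gamma}(1 - e^{-2(b-\mu)t})$, completing the proof.
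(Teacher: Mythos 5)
Your proposal follows exactly the route the paper takes: the authors state this theorem without proof, citing Theorems 3.1 and 3.5 of Kurtz \cite{tk71} applied to the density process $Z_L$ with the transition function $f(z,j)$ displayed just before the statement. Your additional verifications --- the hypothesis checks, the variation-of-constants derivation of the mean and variance, and the computations $F'(z^*)=\mu-b$ and $G(z^*)=\tfrac{2b}{\gamma}(b-\mu)$ --- are all correct and simply fill in details the paper leaves implicit.
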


Finally, as $L \to \infty$, the first exit time $\tau_{LA}$ from $n^*$ to $n^* \pm A\sqrt{L}$ will be approximately the first exit time $\tau_A$ for the OUP starting at 0 from $\pm A$.  The distribution of $\tau_A$ has been shown by Breiman \cite{LB66} to be
	\begin{equation}
	P\{\tau_A > t\} = \alpha e^{-2\nu(A) t} + O\left(e^{-2(\nu(A)+\delta) t}\right),
	\end{equation}
such that
        \begin{equation*}
        \begin{array}{l}
        \hspace {.5 cm} (i) \lim \limits_{A \rightarrow \infty} \nu(A) = 0\\
\\
        \hspace {.5 cm} (ii) \lim \limits_{A \rightarrow 0} \nu(A) = \infty\\
\\
        \hspace {.5 cm} (iii) \mathrm{\,if} A^2 \mathrm{\,is\,the \,smallest\,positive\,root\,of} \\
	\hspace {3 cm} \displaystyle\sum_{k=0}^m \frac{(-2A^2)^k}{(2k)!} \frac{m!}{(m-k)!}, \\
	\hspace {1.3 cm} \mathrm{then\,} \nu(A) = m. \\
       \end{array}
        \end{equation*}  


\subsection{First passage time}
As $L \to \infty$,
\begin{equation*}
\begin{split}
\mathrm{E}\tau_{n^* \to 0} &= \sum_{k=1}^{n^*} \tau_{k \to k-1} = \sum_{k=1}^{n^*} \frac{S_k}{\alpha_k} = S_1 \sum_{k=1}^{n^*}\frac{1}{\alpha_k} \frac{S_k}{S_1} \\
	&\sim S_1 \frac{1}{\mu} \left(1 + \frac{1}{2}\frac{\mu}{b} + \frac{1}{3} \left(\frac{\mu}{b}\right)^2 + \ldots \right) \\
	 &= \frac{b}{\mu^2} \ln \left(\frac{b}{b-\mu}\right) S_1.
\end{split}
\end{equation*} 
Let $n_1 = (1 - \delta_1) n^*$, $\delta_1 > 0$.  Then
\begin{equation}\label{tilde1}
\begin{split}
&\mathrm{E}\tau_{n^* \to n_1} = \sum_{k=n_1+1}^{n^*} \tau_{k \to k-1} = \sum_{k=n_1+1}^{n^*} \frac{S_k}{\alpha_k} \\
	&\sim \frac{S_{n_1+1}}{\alpha_{n_1+1}} \left(1 + \frac{n_1+1}{n_1+2}\rho_1 + \frac{n_1+1}{n_1+3} \rho_1^2 + \ldots \right) \sim \frac{S_{n_1+1}}{\alpha_{n_1+1}} \cdot \frac{1}{1-\rho_1},
\end{split}
\end{equation} 
where $\rho_1 := 1- (1 - \frac{\mu}{b}) \delta_1$.

Next, we analyze the first passage time $u(x)$ to $\{n_1,n_2\}$ for $x\in [n_1,n_2]$, where $n_1 = (1-\delta_1)n^*$, $n_2 = (1+\delta_2)n^*$, $\delta_1, \delta_2 > 0$. $u$ must satisfy 
\begin{equation}\label{DE1}
\mathcal{L}u = -1,
\end{equation}
 as well as the boundary conditions $u(n_1) = u(n_2) = 0$.  A particular solution to \eqref{DE1} is $\psi_1(x)  := \mathrm{E}\tau_{x\to0}$.
Then, $$u = \psi_1 + c_1 + c_2\psi_2,$$ where $c_1$ and $c_2$ are constants and $\psi_2$ satisfies $\mathcal{L}\psi_2 = 0$.

We choose $\psi_2$ to satisfy $\psi_2(n^*) = 0$ and $\psi_2(n^*+1) = 1$.  This gives
\begin{equation}\label{psi2}
  \psi_2(x) =  \left\{
     \begin{array}{ll}
       -\left(\frac{\beta_{n^*}}{\alpha_{n^*}} + \frac{\beta_{n^*}\beta_{n^*-1}}{\alpha_{n^*}\alpha_{n^*-1}} + \ldots + \frac{\beta_{n^*} \cdots \beta_{x+1}} {\alpha_{n^*} \cdots \alpha_{x+1}} \right) & \text{ if } x < n^*\\
       0 & \text{ if } x = 0\\
       1 + \frac{\alpha_{n^*+1}}{\beta_{n^*+1}} + \ldots + \frac{\alpha_{n^*+1} \cdots \alpha_{x-1}} {\beta_{n^*+1} \cdots \beta_{x-1}} & \text{ if } x > n^*
     \end{array}
   \right.
\end{equation}
Let us find the asymptotics of $\psi_2(x)$ for $x=n_1=n^*(1-\delta_1)$, $x=n_2=n^*(1+\delta_2)$.  We will calculate up to a constant factor and use the standard notation, $a_n \asymp b_n$ to mean $$0 < c_1 \le \frac{a_n}{b_n} \le c_2 < \infty$$ for appropriate constants $c_1$ and $c_2$.

The last term
	\[ A_{n_2-1} := \frac{\alpha_{n^*+1}\cdots\alpha_{n_2-1}}{\beta_{n^*+1} \cdots \beta_{n_2-1}} \]
gives the main contribution to $\psi_2(n_2)$.  The terms $A_{n_2-k}$ asymptotically form a geometric progression with the ratio $$\rho_2 = \frac{1}{1+\delta_2(1-\mu/b)} < 1$$
(because
\begin{equation*}
	\begin{split}
	\frac{\alpha_{n_2-k}}{\beta_{n_2-k}} &= \frac{\mu(n_2-k) + \frac{\gamma}{L}(n_2-k)^2}{b(n_2-k+1)} = \frac{\mu}{b} + \frac{\gamma}{bL}n_2 + o(1) \\
	& = 1 + \delta_2(1- \frac{\mu}{b}) + o(1).)
	\end{split}
\end{equation*}
This means that, for $\omega := \frac{\gamma}{bL}$, 
	\[\psi_2(n_2) \sim A_{n_2-1}(1 + \rho_2 + \rho_2^2 + \ldots) \sim \frac{A_{n_2-1}}{1-\rho_2} \asymp A_{n_2-1}. \]
Moreover, using the Euler-Maclaurin formula (compare \eqref{EMf})
\begin{equation*}
	A_{n_2-1} \asymp \prod_{k=0}^{\delta_2  n^*} (1+k\omega) = \exp\left(\sum _{k=0}^{\delta_2  n^*} \ln(1+k\omega) \right) 
	\asymp \exp \left({\frac{1}{\omega} \int\limits_{0}^{\delta_2 (1-\frac{\mu}{b})} \!\! \ln (1+x) dx}\right),
\end{equation*}
and so
\begin{equation*}
	\psi_2(n_2) \asymp \exp \left({\frac{1}{\omega} \int\limits_{0}^{\delta_2 (1-\frac{\mu}{b})} \!\! \ln (1+x) dx}\right).
\end{equation*}
Similar calculations for $\psi_2(n_1)$ yield
\begin{equation*}
	\psi_2(n_1) \asymp -\exp \left(-{\frac{1}{\omega} \int\limits_{0}^{\delta_1 (1-\frac{\mu}{b})} \!\! \ln (1-x) dx}\right).
\end{equation*}

It is convenient to introduce some sort of ``symmetry'' of the logistic Markov chain with respect to the equilibrium point $n^* = \left\lfloor \frac{(b-\mu)L}{\gamma} \right\rfloor$. The simplest way to do so is to assume that 
	\[\mathrm{P}_{n^*}\{N_L(\tau_{[n_1,n_2]})=n_2\} \approx \frac{1}{2}. \]
This means that $\psi_2 (n_2) \asymp -\psi_2 (n_1)$, i.e.
	\[-\int\limits_{0}^{\delta_1 (1-\frac{\mu}{b})} \!\! \ln (1-x) dx =  \int\limits_{0}^{\delta_2 (1-\frac{\mu}{b})} \!\! \ln (1+x) dx. \]
The last equation uniquely determines $\delta_2$ as a function of $0<\delta_1<1$.

With this symmetry, we can determine $\mathrm{E}\tau_{n^*\to\{n_1,n_2\}}$.  Consider again the problem
\begin{equation}\label{DE2}
	\mathcal{L}u = -1,\quad u(n_1) = u(n_2) = 0. 
\end{equation}
We modify the previous particular solution and define $\tilde{\psi}_1(x) := \mathrm{E}\tau_{x\to n_1}$ for $x>n_1$.  Then, 
	\[u(x) := \tilde{\psi}_1(x) + c_1 + c_2 \psi_2(x) \]
satisfies \ref{DE2} for some constants $c_1$ and $c_2$.  Using $\psi_2 (n_2) \asymp -\psi_2 (n_1)$ and $\tilde{\psi}_1(n_1) = 0$, we obtain $c_1 = -\frac{1}{2}\tilde{\psi}_1(n_2)$.  From \ref{tilde1} and using I of Theorem \ref{Degen} we have that 
\begin{equation*}
\begin{split}
	\tilde{\psi}_1(n_2) &= \mathrm{E}\tau_{n_2\to n_1} = \frac{S_{n_1+1}}{\alpha_{n_1+1}(1-\rho_1)} \\
	&\sim \frac{\sqrt{2\pi\gamma}\exp\left(\frac{b}{\gamma}L\ln \rho_1 + \delta_1(1- \ln \rho_1)n^* \right)}{(b-\mu)(1-\delta_1)(1-\rho_1)\sqrt{b\rho_1L}} \\
	&\asymp \frac{\exp\left(\frac{b}{\gamma}L\ln \rho_1 + \delta_1(1- \ln \rho_1)n^* \right)}{\sqrt{L}}.
\end{split}
\end{equation*}
And so, finally
\begin{equation}\label{symmFPT}
\begin{split}
	\mathrm{E}\tau_{n^*\to\{n_1,n_2\}} &= u(n^*) = \tilde{\psi}_1(n^*) - \frac{1}{2}\tilde{\psi}_1(n_2) \sim \frac{1}{2}\tilde{\psi}_1(n_2) \\
	&\asymp \frac{\exp\left(\frac{b}{\gamma}L\ln \rho_1 + \delta_1(1- \ln \rho_1)n^* \right)}{\sqrt{L}}.
\end{split}
\end{equation}

This result tells us that arrival at 0 is extremely improbable and, therefore, we can essentially ignore it.  For the central limit theorem and large deviation results of the previous sections, therefore, we do not need to assume ergodicity, e.g., by adjusting the $\beta$ parameter to prevent absorption at 0.  This also suggests that the expected recurrence time to a point $k$, $\tau_k \sim \pi(k)^{-1}$, with $\pi(k)$ the invariant probability of state $k$.  In particular, the expected recurrence time to the equilibrium point $n_L^*$ will be $O(\sqrt{L})$ and to a point $n_L^* + \delta L$ will be $\sqrt{L}e^{O(L)}$.\\


\section{Conclusion}
The central mathematical topic concerning the Bolker-Pacala model is the existence (for $\beta > \mu$) of a limiting distribution.  In the mean field approximation, this model is essentially equivalent to the logistic Markov chain.  This chain is ergodic, i.e., it has a stationary law.  We have studied, here, the existence of the stationary regime and, in addition, have treated it analytically, for example, providing several limit theorems.



\begin{thebibliography}{1000}
 \bibitem{bp99} {\sc B. M. Bolker, S. W. Pacala}, {\em Spatial moment equations for plant competition: Understanding spatial strategies and the advantages of short dispersal}, The American Naturalist, 153, (1999) 575-602.
 \bibitem{bpn03} {\sc B. M. Bolker, S. W. Pacala, C. Neuhauser}, {\em Spatial dynamics in model plant communities: What do we really know?}, The American Naturalist, 162, (2003) 135-148.
 \bibitem{LB66}{\sc L. Breiman}, {\em First exit times from a square root boundary}, Fifth Berkeley Symposium, 1966-7, 9-16.
 \bibitem{Day83} {\sc M. V. Day}, {\em On the exponential exit law in the small parameter exit problem}, Stochastics, 8, (1983) 297-323.
 \bibitem{fe1}{\sc William Feller}, {\em An introduction to probability theory and its applications, Vol. I,Third edition}, Wiley, New York, 1968.
 \bibitem{fmw} {\sc Y. Feng, S. A. Molchanov. and J. Whitmeyer}, {\em Random walks with heavy tails and limit theorems for branching processes with migration and immigration.} Stochastics and Dynamics, 12(1):1150007, (2012) 23 pages.
 \bibitem{fkkk2} {\sc D. Finkelshtein, Y. Kondratiev, Y. Kozitsky, O. Kutovyi}, {\em Stochastic evolution of a continuum particle system with dispersal and competition: Micro- and mesoscopic description}, The European Physical Journal Special Topics, 216(1), (2013) 107-116.
\bibitem{gs} {\sc I.I. Gikhman, A.V. Skorokhod}, {\em The theory of stochastic processes}, Springer-Verlag, Berlin, 1974.
 \bibitem{gr65}{\sc I.S. Gradshteyn, I.M. Ryzhik}, {\em Table of integrals, series, and products}, Academic Press, New York, 1965.
 \bibitem{teh} {\sc T.E. Harris}, {\em The theory of branching processes}, Springer, Berlin, 1963.
 \bibitem{km65}{\sc Samuel Karlin and James McGregor}, {\em Ehrenfest urn models}, Journal of Applied Probability, 2, (1965) 352-376.
 \bibitem{kpp} {\sc A. N. Kolmogorov, I. G. Petrovskii, and N. S. Piskunov}, {\em A study of
the diffusion equation with increase in the quantity of matter, and its application
to a biological problem}, Bull. Moscow Univ. Math. Ser. A, 1, (1937) 1-25.
 \bibitem{kkm11} {\sc Y. Kondratiev, O. Kutoviy, S. Molchanov}, {\em On population dynamics in a stationary random environment}, U. of Bielefeld preprint.
 \bibitem{kkp08} {\sc Y. Kondratiev, O. Kutovyi, S. Pirogov}, {\em Correlation functions and invariant measures in continuous contact model}, Infn. Dimens. Anal. Quantum Probab. Relat. Top. 11, (2008) 231-258.
 \bibitem{ks06} {\sc Y. Kondratiev, A. Skorokhod}, {\em On contact processes in continuum}, Infn. Dimens. Anal. Quantum Probab. Relat. Top. 9, (2006) 187-198.
 \bibitem{tk70} {\sc Thomas G. Kurtz}, {\em Solutions of ordinary differential equations as limits of pure jump Markov processes}, J. Appl. Prob, 7, (1970) 49-58.
 \bibitem{tk71} {\sc Thomas G. Kurtz}, {\em Limit theorems for sequences of jump Markov processes approximating ordinary differential equations}, J. Appl. Prob, 8, (1971) 344-356.
 \bibitem{pp00} {\sc P. K. Pollett}, {\em Diffusion approximations for ecological models}, Proceedings of the International Congress on Modelling and Simulation, Vol.2 (ed. Fred Ghassemi), Modelling and Simulation Society of Australia and New Zealand, Canberra, Australia (2001) 843-848.
 \bibitem{WW1915} {\sc E.T.Whittaker, G.N.Watson}, {\em A course of modern analysis}, Merchant Books, 1915.


\end{thebibliography}
\end{document}